\newtheorem{theorem}{Theorem}[section]
\newtheorem{proposition}[theorem]{Proposition}
\newtheorem{corollary}[theorem]{Corollary}
\newtheorem{definition}[theorem]{Definition}
\newtheorem{remark}[theorem]{Remark}
\newtheorem{example}[theorem]{Example}
\def\ZZ{{\mathbb Z}}
\def\QQ{{\mathbb Q}}
\def\KK{{\mathbb K}}
\def\Ker{{\mathrm{Ker\,}}}
\def\Im{{\mathrm{Im\,}}}
\def\Mon{{\mathrm{Mon}}}
\def\Gr{Gr\"obner}
\def\lm{{\mathrm{lm}}}
\def\lc{{\mathrm{lc}}}
\def\LM{{\mathrm{LM}}}
\def\HF{{\mathrm{HF}}}
\def\HS{{\mathrm{HS}}}
\def\mi{{\mathbf{mi}}}
\def\ri{{\mathbf{ri}}}
\def\OrbitData{{\textsc{OrbitData}}}
\def\F{{\mathcal{F}}}
\def\O{{\mathcal{O}}}
\def\N{{\mathcal{N}}}
\def\B{{\mathbf{B}}}
\def\C{{\mathbf{C}}}
\def\vH{{\mathbf{H}}}
\begin{document}

\title[Monomial right ideals and Hilbert series $\ldots$]
{Monomial right ideals and the Hilbert series of noncommutative modules}

\author[R. La Scala]{Roberto La Scala$^*$}

\address{$^*$ Dipartimento di Matematica, Universit\`a di Bari, Via Orabona 4,
70125 Bari, Italy}
\email{roberto.lascala@uniba.it}

\thanks{Partially supported by Universit\`a di Bari}

\subjclass[2010] {Primary 16Z05. Secondary 16P90, 05A15}

\keywords{Hilbert series, noncommutative modules, automata}

\maketitle

\begin{quotation}
{\em Per Armando, in memoriam.}
\end{quotation}

\begin{abstract}
In this paper we present a procedure for computing the rational sum
of the Hilbert series of a finitely generated monomial right module $N$
over the free associative algebra $\KK\langle x_1,\ldots,x_n \rangle$.
We show that such procedure terminates, that is, the rational sum exists,
when all the cyclic submodules decomposing $N$ are annihilated
by monomial right ideals whose monomials define regular formal languages.
The method is based on the iterative application of the colon right
ideal operation to monomial ideals which are given by an eventual
infinite basis. By using automata theory, we prove that the number
of these iterations is a minimal one. In fact, we have experimented
efficient computations with an implementation of the procedure in Maple
which is the first general one for noncommutative Hilbert series.
\end{abstract}



\section{Introduction}

It is difficult to list the plenty of objects that are represented
as a finite sequence of symbols, that is, by a word or a string. Let us
think for example to our own dna, a message in a code, a word in a natural
or formal language, the base expansion of a number, a path in a graph or
a transition of states in a machine, etc. From the perspective of the
algebraist these elements are just monomials of the free associative algebra
$F = \KK\langle x_1,\ldots, x_n \rangle$, that is, the algebra of polynomials
in the noncommutative variables $x_i$. In fact, any finitely generated
(associative) algebra is isomorphic to a quotient $F/I$ where $I\subset F$
is the two-sided ideal of the relations satisfied by the generators of the
algebra. Another important generalization is the notion of finitely generated
right $F$-module which corresponds to a quotient $F^r/M$ of the free right
$F$-module $F^r = F\oplus \cdots \oplus F$ with respect to some submodule
$M\subset F^r$. It is probably useless to mention the importance of the
noncommutative structures, for instance, in physics but let us just refer
to the recent book \cite{Su}.

Fundamental data about algebras and modules consist in their dimension
over the base field $\KK$ or in their ``growth'' in case they
are infinite dimensional. By definition, the growth or the affine Hilbert
function of $N = F^r/M$ is the map $d\mapsto \dim F^r_{\leq d}/M_{\leq d}$
where $F_{\leq d}\subset F$ is the subspace of the polynomials of degree
at most $d$ and $M_{\leq d} = M\cap F^r_{\leq d}$. This sequence is naturally
encoded by its generating function which is called the affine Hilbert
series of $N$.

Unlike the commutative case, owing to non-Noetherianity of the free associative
algebra one has that this series has generally not a rational sum.
Nevertheless, finitely generated free right modules have clearly a rational
Hilbert series and the same happens in fact for all finitely presented modules.
This is because $F$ is a free ideal ring \cite{Co} which implies that
any right submodule $M\subset F^r$ is in fact free.
It is then natural to ask if there are other cases when the Hilbert series
is rational. One possible approach consists in reducing the problem to
the monomial cyclic case that is to right modules $C = F/I$ where $I\subset F$
is a monomial right ideal. This can be done by using a suitable monomial
ordering of $F^r$ and by proving a noncommutative analogue of a well-known
theorem of Macaulay. We present these results in Section 2.

A short exact sequence for a monomial cyclic right module $C = F/I$ is presented
in Section 3 allowing the possibility to reduce the computation of the Hilbert
series of $C$ to the one of $n$ cyclic modules defined by the colon right
ideals of $I$ with respect to each variable $x_i$.
By iterating this exact sequence one obtains in principle a rational sum
for the Hilbert series of $C$ but the main problem is to provide that
one has only a finite number of iterations. Under such assumption,
in Section 4 we present all details of the corresponding algorithm
which is based on the explicit description of a monomial basis for the
colon right ideals of a monomial ideal with respect to a monomial of $F$.

In Section 5 we explain that the set of the colon right ideals of $I$
which are involved in the computation of the Hilbert series of the
monomial cyclic right module $C = F/I$ are in fact the states
of a minimal deterministic automaton recognizing the formal language
given by the monomials of $I$. By the Kleene's theorem this implies that
the proposed procedure has termination if and only if such language is
a regular one, that is, all the monomials of $I$ can be obtained from
finite subsets by applying a finite number of elementary operations.
For the two-sided case, one finds in \cite{Ma,Uf2} the idea that
the rationality of the Hilbert series of an algebra $A = F/I$ is provided
by the existence of a deterministic finite automaton recognizing the normal
monomials modulo $I$ or, equivalently, the monomials of $I$.
In this paper we present a complete and effective direct method
for costructing a minimal such automaton. The roots of our approach can
be traced back to the formal languages notion of Nerode equivalence that
we have enhanced for right and two-sided monomial ideals by means of
the tools of algebra.

The proposed ideas results in a feasible algorithm which has been illustrated
in Section 6 for an infinitely related monomial algebra that we obtain from
an Artin group. In Section 7, by means of an experimental implementation that
we have developed with Maple, we present some data and timings for the
computation of the Hilbert series of a couple of Hecke algebras. Finally,
some conclusions and ideas for further research directions are proposed
in Section 8.


\section{Hilbert functions and series}

Let $\KK$ be any field and let $X = \{x_1,\ldots,x_n\}$ be any finite set.
We denote by $F = \KK\langle X\rangle$ the free associative algebra which is
freely generated by $X$. In other words, the elements of $F$ are noncommutative
polynomials in the variables $x_i$. We define then $W = \Mon(F)$
the set of all monomials of $F$, that is, the elements of $W$ are words
over the alphabet $X$. We consider $F$ as a graded algebra by means of
the standard grading which defines $\deg(w)$ as the length of a word $w\in W$.
For any integer $r > 0$, denote by $F^r$ the (finitely generated) free right
$F$-module of rank $r$. If $\{e_i\}$ is the canonical basis of $F^r$ then
the elements of such module are the right $F$-linear combinations
$\sum_i e_i f_i$ ($f_i\in F$). We endow the module $F^r$ with the standard
grading, that is, we put $\deg(e_i w) = \deg(w)$, for any $1\leq i\leq r$
and for each $w\in W$. This implies that $F^r$ is also a filtered module
by means of the canonical filtration $F^r = \bigcup_{d\geq 0} F^r_{\leq d}$
where $F^r_{\leq d} = \sum_{0\leq k\leq d} F^r_k$, for any $d\geq 0$.
We recall now the notion of Hilbert function and series for finitely
generated right $F$-modules. 

\begin{definition}
Let $M$ be a right submodule of $F^r$ and define the quotient right module
$N = F^r/M$. For all $d\geq 0$, we define $\HF_a(N)(d) =
\dim_\KK F^r_{\leq d}/M_{\leq d}$ where $M_{\leq d} = M\cap F^r_{\leq d}$.
Moreover, we put
\[
\HF(N)(d) =
\left\{
\begin{array}{cl}
\HF_a(N)(0) & \mbox{if}\ d = 0; \\
\HF_a(N)(d) - \HF_a(N)(d-1) & \mbox{otherwise}.
\end{array}
\right.
\]
We call the sequences $\HF_a(N) = \{\HF_a(N)(d)\}_{d\geq 0}$ and $\HF(N) =
\{\HF(N)(d)\}_{d\geq 0}$ respectively the {\em affine Hilbert function}
and the {\em (generalized) Hilbert function of the finitely generated
right $F$-module $N$}. Observe that if $M$ (hence $N$) is a graded module,
that is, $M = \sum_{d\geq 0} M_d$ with $M_d = M\cap F^r_d$, then $\HF(N)(d) =
\dim_\KK F^r_d/M_d$. We denote by $\HS_a(N),\HS(N)$ the generating
series corresponding to $\HF_a(N),\HF(N)$, namely
\[
\HS_a(N) = \sum_{d\geq 0} \HF_a(N)(d) t^d\,,\,
\HS(N) = \sum_{d\geq 0} \HF(N)(d) t^d.
\]
We call such series respectively the {\em affine Hilbert series} and
the {\em (generalized) Hilbert series of $N$}. From the definition
it follows immediately that $\HS(N) = (1 - t)\HS_a(N)$.
\end{definition}

In literature one may find different names for the above functions
and series, especially in the case when $M\subset F$ is a two-sided
ideal and hence $N = F/M$ is an algebra. For instance, it is quite common
to call $\HF_a(N)$ the {\em growth function of $N$}. Sometimes one has
also the name of Henri Poincar\'e together with the one of David Hilbert
to denominate all these functions and series. It is well-known that
any Hilbert function can be obtained as the one which is defined by
a suitable monomial module. To this purpose we introduce the following
notions. 

\begin{definition}
Let $F^r$ be a free right module endowed with the canonical basis $\{e_i\}$.
We define $W(r) = \{e_i\}W = \{e_i w\mid 1\leq i\leq r, w\in W\}$ which is
a canonical linear basis of $F^r$. For this reason, the elements of $W(r)$ may be
called the {\em monomials of $F^r$}. Let now $\prec$ be a well-ordering of $W(r)$.
We call $\prec$ a {\em monomial ordering} of $F^r$ if $e_i w\prec e_j w'$
implies that $e_i w v\prec e_j w' v$, for all $e_i w, e_j w'\in W(r)$ and $v\in W$.
In particular, we say that $\prec$ is a {\em graded ordering} if $\deg(w) < \deg(w')$
implies that $e_i w\prec e_j w'$, for any $e_i w, e_j w'\in W(r)$.
\end{definition}

From now on, we assume that $F^r$ is endowed with a graded monomial ordering.
For instance, for all $e_i w, e_j w' \in W(r)$ one may define that $e_i w\prec e_j w'$
if and only if $w < w'$ in some graded lexicographic ordering or $w = w'$ and $i < j$.

\begin{definition}
Let $f = \sum_k e_{i_k} w_k c_k\in F^r$ with $e_{i_k} w_k\in W(r)$ and $c_k\in\KK,
c_k\neq 0$. If $e_{i_l} w_l = \max_\prec\{ e_{i_k} w_k \}$ then we put
$\lm(f) = e_{i_l} w_l$ and $\lc(f) = c_l$. Moreover, if $M\subset F^r$ is a right
submodule then one defines the monomial right submodule $\LM(M) =
\langle \lm(f) \mid f\in M, f\neq 0 \rangle\subset F^r$. We call $\LM(M)$
the {\em leading monomial module of $M$}. 
\end{definition}

The following is a generalization of a well-know theorem of Macaulay for
commutative modules (see, for instance, \cite{KR}).

\begin{theorem}
\label{macth}
Let $M\subset F^r$ be a right submodule and consider the right modules
$N = F^r/M$ and $N' = F^r/\LM(M)$. Then, one has that $\HF_a(N) = \HF_a(N')$
and hence $\HF(N) = \HF(N')$.
\end{theorem}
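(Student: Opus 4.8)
The plan is to run the classical ``normal monomials form a basis of the quotient'' argument, but carried out at the level of the canonical filtration rather than of a grading, since $M$ is not assumed graded. Fix $d\geq 0$ and call a monomial $u\in W(r)$ \emph{normal} (modulo $M$) if $u\notin\LM(M)$; write $\N_{\leq d}$ for the set of normal monomials of degree $\leq d$. The goal is to prove that the images of $\N_{\leq d}$ form a $\KK$-basis of $F^r_{\leq d}/M_{\leq d}$, whence $\HF_a(N)(d) = \#\N_{\leq d}$. Since $\LM(\LM(M)) = \LM(M)$, the set of normal monomials is the same for $M$ and for $\LM(M)$, so the same count applied to $N' = F^r/\LM(M)$ yields $\HF_a(N) = \HF_a(N')$ at once; and $\HF(N) = \HF(N')$ then follows from the difference formula defining $\HF$ in terms of $\HF_a$.

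Before the main argument I would record two facts about a graded monomial ordering $\prec$ on $F^r$. First, because $\prec$ is compatible with right multiplication and distinct monomials of $F^r$ stay distinct after multiplying on the right by a fixed $v\in W$, one has $gv\neq 0$ and $\lm(gv) = \lm(g)\,v$ (with $\lc(gv)=\lc(g)$) for every $g\in M\setminus\{0\}$ and $v\in W$; consequently the monomials of the monomial module $\LM(M)$ are exactly the products $\lm(g)\,v$, and each such monomial is the leading monomial of the genuine element $gv\in M$. Second, since $\prec$ is \emph{graded}, the leading monomial of any $h\neq 0$ has the maximal degree among the monomials occurring in $h$, so $\deg(\lm(h)) = \deg(h)$; in particular the element $gv$ above lies in $M_{\leq\deg(\lm(g)v)}$.

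For the spanning part I would take $f\in F^r_{\leq d}$, $f\neq 0$, and argue by Noetherian induction on $\lm(f)$ in the well-ordering $\prec$. If $\lm(f)\in\LM(M)$, pick $g\in M\setminus\{0\}$ and $v\in W$ with $\lm(g)\,v = \lm(f)$; then $gv\in M$ has the same leading monomial and, by the second fact, degree $\deg(f)\leq d$, so subtracting the appropriate scalar multiple of $gv$ leaves $f$ in $F^r_{\leq d}$ and in the same class modulo $M_{\leq d}$ while strictly lowering $\lm(f)$. If $\lm(f)\notin\LM(M)$, then $\lm(f)\in\N_{\leq d}$, and I pass to $f - \lc(f)\lm(f)$, which again has strictly smaller leading monomial. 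Either way the induction closes, showing $\N_{\leq d}$ spans $F^r_{\leq d}/M_{\leq d}$. Linear independence is immediate: a nonzero $\KK$-linear combination of elements of $\N_{\leq d}$ that lay in $M$ would have its leading monomial simultaneously normal and in $\LM(M)$, a contradiction.

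I expect the only genuinely delicate points to be those two preliminary facts: that taking leading monomials commutes with right multiplication — so that every monomial of $\LM(M)$ comes from an \emph{actual} element of $M$ of the \emph{same degree} — and that gradedness forces the reduction process to remain inside $F^r_{\leq d}$. Everything else is routine Gr\"obner-style rewriting, and one should note in passing that working with the affine (filtered) Hilbert function rather than a graded one is essential precisely because $M$ need not be homogeneous.
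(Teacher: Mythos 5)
Your proposal is correct and follows essentially the same route as the paper: identify the normal monomials (those outside $\LM(M)$) as a basis of the quotient by $\LM(M)$, and then use the division procedure — together with the fact that a graded ordering keeps the reduction inside the filtration piece $F^r_{\leq d}$ — to show that the same monomials also span $F^r_{\leq d}/M_{\leq d}$, with linear independence following from the leading-monomial argument. You make two steps explicit that the paper leaves implicit (that $\lm(gv) = \lm(g)v$, so every monomial of $\LM(M)$ is the leading monomial of an element of $M$ of the same degree, and that gradedness forces $\deg(\lm(h)) = \deg(h)$), but these are exactly the facts the paper is tacitly using, so the two proofs coincide.
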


\begin{proof}
Put $M' = \LM(M)$ and denote $W(r)_{\leq d} = W(r)\cap F^r_{\leq d}$, for any
$d\geq 0$. Since $M'\subset F^r$ is a monomial submodule, one has clearly
that a linear basis of $F^r_{\leq d}/M'_{\leq d}$ is given by the set
$\{e_i w + M'_{\leq d}\mid e_i w\in W(r)_{\leq d}\setminus M'\}$.
Consider now any element $f\in F^r, f\neq 0$. If $\lm(f)\in M'$ then
there exists $g\in M$ such that $\lm(f) = \lm(g) v$, for some $v\in W$.
By putting $f_1 = f - g v \frac{\lc(f)}{\lc(g)}$ we obtain that $f \equiv f_1$ mod $M$
with $f_1 = 0$ or $\lm(f)\succ \lm(f_1)$. In the last case, we can repeat
this division step for the element $f_1\in F^r, f_1\neq 0$. Since $\prec$
is a well-ordering, we conclude that $f \equiv f_2$ mod $M$, for some $f_2\in F^r$
such that $f_2 = 0$ or $\lm(f)\succ \lm(f_2)\notin M'$. If $f_2\neq 0$
then we consider the element $f_3 = f_2 - \lm(f_2)\lc(f_2)$ and hence
$f - \lm(f_2)\lc(f_2)\equiv f_3$ mod $M$. Note that one has $f_3 = 0$
or $\lm(f_2)\succ \lm(f_3)$. By iterating the division process we finally
obtain that $f\equiv f'$ mod $M$ where $f' = \sum_k e_{i_k} w_k c_k$ with
$e_{i_k} w_k\in W(r)\setminus M'$ and $c_k\in \KK$. Moreover, one has clearly
that $f'\in M$ if and only if $f' = 0$. Recall now that $\prec$ is a graded
ordering and note that $\lm(f)\succ e_{i_k} w_k$ when $c_k\neq 0$.
We conclude that if $f\in F^r_{\leq d}$ then also $f'\in F^r_{\leq d}$.
\end{proof}

For the computation of the Hilbert series of a monomial module we are
reduced to the cyclic case. Precisely, it holds immediately the following result.

\begin{proposition}
\label{cycdec}
Let $M\subset F^r$ be a monomial right submodule, that is, $M$ is generated
by some set $\{e_{i_k} w_k\}\subset W(r)$. Then, one has that $M =
\bigoplus_i e_i I_i$ where $I_i\subset F$ is the monomial right ideal
which is generated by $\{w_k\mid e_{i_k} = e_i\}\subset W$. In particular,
if $N = F^r/M$ and $C_i = F/I_i$ then $\HS(N) = \sum_i \HS(C_i)$.
\end{proposition}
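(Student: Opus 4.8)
The plan is to prove the two assertions of Proposition~\ref{cycdec} in turn: first the direct sum decomposition $M = \bigoplus_i e_i I_i$ as right $F$-modules, and then the additivity of the Hilbert series, which will follow formally once the decomposition is in place.

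For the decomposition, I would start from the hypothesis that $M$ is generated by a set of monomials $\{e_{i_k} w_k\} \subset W(r)$. Since $F^r = \bigoplus_i e_i F$ as right $F$-modules, and each generator $e_{i_k} w_k$ lies in the summand $e_{i_k} F$, the submodule $M$ is contained in $\bigoplus_i e_i F$ in a way compatible with the grouping of generators by index. Concretely, I would define $I_i \subset F$ to be the right ideal generated by $\{w_k \mid e_{i_k} = e_i\}$, and check two inclusions. First, each $e_i I_i \subseteq M$: a typical element of $e_i I_i$ is $e_i \sum_k w_k f_k = \sum_k (e_i w_k) f_k$ with $e_i w_k$ among the generators of $M$, hence it lies in $M$. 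So $\bigoplus_i e_i I_i \subseteq M$ (the sum being direct inside $F^r$ because the $e_i F$ are independent). Conversely, an arbitrary element of $M$ is a right $F$-linear combination $\sum_k (e_{i_k} w_k) g_k = \sum_i e_i (\sum_{k : e_{i_k} = e_i} w_k g_k)$, and the inner sum lies in $I_i$ by definition; thus $M \subseteq \bigoplus_i e_i I_i$. Combining, $M = \bigoplus_i e_i I_i$. I should also note that $e_i I_i$ as a right $F$-module is isomorphic to $I_i$ via $e_i w \mapsto w$, since the map $F \to e_i F$, $f \mapsto e_i f$, is an isomorphism of graded right $F$-modules.

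For the Hilbert series claim, the key observation is that the decomposition is graded and respects the filtration: $\deg(e_i w) = \deg(w)$ by our grading convention, so $M_{\leq d} = \bigoplus_i (e_i I_i)_{\leq d}$ and likewise $F^r_{\leq d} = \bigoplus_i e_i F_{\leq d}$. Passing to quotients, $N = F^r/M = \bigoplus_i (e_i F / e_i I_i) \cong \bigoplus_i F/I_i = \bigoplus_i C_i$ as graded right $F$-modules, and this isomorphism is degreewise (or filtration-wise) a $\KK$-linear isomorphism. Taking dimensions degree by degree gives $\HF(N)(d) = \sum_i \HF(C_i)(d)$ for all $d$, and summing against $t^d$ yields $\HS(N) = \sum_i \HS(C_i)$. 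The same argument with $F^r_{\leq d}$ in place of $F^r_d$ gives the affine version as well, though only the generalized Hilbert series is asserted.

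I do not expect any serious obstacle here; the statement is essentially bookkeeping about how a free module splits as a direct sum of copies of $F$ and how a monomial submodule is pinned inside that splitting. The one point to be careful about is making the identification $e_i I_i \cong I_i$ explicit enough that the dimension count in each degree is unambiguous — but this is exactly the graded isomorphism $F \xrightarrow{\sim} e_i F$ restricted to $I_i$, so it is immediate. This is why the proposition is stated as holding "immediately."
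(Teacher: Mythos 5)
Your argument is correct and is precisely the bookkeeping the paper leaves implicit: the paper states this proposition without proof ("it holds immediately"), and your two-inclusion verification of the decomposition plus the graded identification $e_i F \cong F$ is exactly the intended justification. No gaps.
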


Let $\{w_k\}\subset W$ be a right basis of a monomial right ideal
$I\subset F$. Observe that $\{w_k\}$ is a minimal basis, that is,
for all $j\neq k$ there is no $v\in W$ such that $w_k = w_j v$ if and only if
$\{w_k\}$ is a free right basis. In other words, the ideal $I$ is a free
right submodule of $F$ according to the general property that
the algebra $F$ is a {\em free ideal ring} \cite{Co,Dr}.

\begin{proposition}
\label{fgform}
Let $\{w_k\}\subset W$ be a finite free basis of a (finitely generated)
monomial right ideal $I\subset F$ and define $C = F/I$ the corresponding
monomial cyclic right module. Then, one has that
\[
\HS(C) = \frac{1 - \sum_k t^{d_k}}{1 - nt}.
\]
\end{proposition}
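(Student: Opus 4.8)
The plan is to compute the Hilbert series of $C = F/I$ directly from a free resolution of $I$ as a right $F$-module. Since $\{w_k\}$ is a finite free basis of $I$, there is a short exact sequence of graded right $F$-modules
\[
0 \to \bigoplus_k F(-d_k) \to F \to C \to 0,
\]
where $d_k = \deg(w_k)$ and the first map sends the $k$-th generator to $w_k$; freeness of the basis (Proposition preceding the statement) is exactly what makes the left-hand module free with the stated generators and the map injective. Taking Hilbert series is additive on short exact sequences of graded modules, so $\HS(C) = \HS(F) - \sum_k t^{d_k}\HS(F)$.

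Next I would compute $\HS(F)$. The monomials of $F$ of degree $d$ are the words of length $d$ over an alphabet of size $n$, so $\dim_\KK F_d = n^d$ and $\HS(F) = \sum_{d\geq 0} n^d t^d = 1/(1-nt)$ as a formal power series (equivalently, as the rational sum). Substituting into the previous identity gives
\[
\HS(C) = \frac{1}{1-nt} - \sum_k t^{d_k}\cdot\frac{1}{1-nt} = \frac{1 - \sum_k t^{d_k}}{1-nt},
\]
which is the claimed formula.

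Alternatively, and perhaps more in keeping with the combinatorial flavour of the paper, one can argue directly on normal words: a linear basis of $C$ is given by the words $w\in W$ that are not right-divisible by any $w_k$, and one sets up a bijection/counting recursion for these. But the resolution argument is cleanest, so the only point needing care is the justification that the syzygy module is free on the obvious generators — i.e. that a minimal monomial basis of a right ideal is a free basis — which is precisely the free-ideal-ring property of $F$ already recalled in the excerpt. Given that, there is essentially no obstacle; the verification of $\dim_\KK F_d = n^d$ and the additivity of $\HS$ on the short exact sequence are routine.
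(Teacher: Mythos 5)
Your proof is correct and takes essentially the same route as the paper: the short exact sequence $0 \to \bigoplus_k F[-d_k] \to F \to C \to 0$, additivity of Hilbert series, and the computation $\HS(F) = 1/(1-nt)$. The notation $F(-d_k)$ versus the paper's $F[-d_k]$ is immaterial, and your remark that injectivity is precisely the free-basis/free-ideal-ring point matches the paper's set-up.
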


\begin{proof}
Put $d_k = \deg(w_k)$ and denote by $F[-d_k]$ the algebra $F$ endowed
with the grading induced by the weight $d_k$, that is, we define
$F[-d_k]_d = 0$ if $d < d_k$ and $F[-d_k]_d = F_{d-d_k}$ otherwise.
In a similar way, one defines the graded free right $F$-module
$\bigoplus_k F[-d_k]$. If $\{e_k\}$ is the canonical basis of such
module then by definition $\deg(e_k) = \delta_k$, for all $k$.
We consider now the graded right $F$-module homomorphism
\[
\bigoplus_k F[-d_k]\to F,\ \sum_k e_k f_k\mapsto \sum_k w_k f_k
\]
which implies the following short exact sequence
\[
0\to \bigoplus_k F[-d_k]\to F\to C\to 0.
\]
Since the Hilbert series of $F[-d_k]$ is clearly $t^{d_k}/(1 - n t)$
we obtain that
\[
\HS(C) = \frac{1}{1 - nt} - \sum_k \frac{t^{d_k}}{1 - n t} =
\frac{1 - \sum_k t^{d_k}}{1 - nt}.
\]
\end{proof}

Note that by Proposition \ref{cycdec} one obtains also the rationality of the
Hilbert series of a finitely presented (monomial) right module.
Unfortunately, the free associative algebra $F = \KK\langle X \rangle$ is not
a right Noetherian ring. Consider, for instance, the right ideal
$I = \langle x_1, x_2x_1, x_2^2x_1, \ldots \rangle$. Then, Proposition \ref{fgform}
does not generally apply and we have to find a different approach to
the computation of the Hilbert series of infinitely presented monomial
cyclic right modules.

\section{A key short exact sequence}

Let $I\subset F$ be any monomial right ideal and consider the corresponding
monomial cyclic right module $C = F/I$. If $F[-1]$ is the algebra $F$
endowed with the grading induced by the weight 1 then we define $I[-1]$
as the right ideal $I$ endowed with the grading of $F[-1]$ and we put
$C[-1] = F[-1]/I[-1]$. Moreover, we denote by $F[-1]^n$ the graded free
right $F$-module which is the direct sum of $n$ copies of $F[-1]$.
Denote by $\{e_i\}$ the canonical basis of $F[-1]^n$ and therefore
$\deg(e_i) = 1$, for all $1\leq i\leq n$. We consider the graded right
$F$-module homomorphism
\[
\varphi:F[-1]^n\to C,\ \sum_i e_i f_i\mapsto \sum_i x_i f_i.
\]
The image $\Im\varphi$ is clearly the graded right submodule
$B = \langle x_1,\ldots,x_n \rangle\subset C$. One has immediately that
the cokernel $C/B$ is either 0 when $C = 0$ or it is isomorphic to the base
field $\KK$ otherwise.

We need to describe the kernel $\Ker\varphi$ which is a graded right
submodule of $F[-1]^n$. For this purpose we make use of the following notion
(see, for instance, \cite{Mo}). For each element $f\in F$, one defines
the {\em colon right ideal}
\[
(I :_R f) = \{g\in F\mid f g\in I\}.
\]
This set is in fact a right ideal of $F$ which generally does not contain $I$.
Nevertheless, one has that $I\subset (I :_R f)$ when $I$ is a two-sided ideal.
It is clear that $(I :_R f) = \langle 1 \rangle$ if and only if $f\in I$.
Because $I$ is a monomial ideal, we have that $(I :_R w)$ is also a monomial ideal,
for any $w\in W$. In Proposition \ref{rprop} and Proposition \ref{tsprop}
we will provide methods to compute a monomial basis of $(I :_R w)$ starting
from a monomial basis of $I$.

Since we are in the monomial case, one has immediately that $\ker\varphi$
is isomorphic to the graded right submodule $\bigoplus_i (I :_R x_i)\subset F[-1]^n$.
To simplify notations, we put $I_{x_i} = (I :_R x_i)$ and we denote by $C_{x_i} =
F/I_{x_i}$ the corresponding monomial cyclic right module. One obtains the
following short exact sequence of graded right $F$-module homomorphisms
\begin{equation}
\label{exseq}
0 \to \bigoplus_{1\leq i\leq n} C_{x_i}[-1]\to C\to C/B\to 0.
\end{equation}
It is useful to introduce the following number
\[
\gamma(I) =
\left\{
\begin{array}{cl}
1 & \mbox{if}\ I = \langle 1 \rangle, \\
0 & \mbox{otherwise.}
\end{array}
\right.
\]
In other words, we have that $\dim_\KK (C/B) = 1 - \gamma(I)$. By the exact
sequence (\ref{exseq}) one obtains the following key formula for the
corresponding Hilbert series
\begin{equation}
\label{form}
\HS(C) = 1 - \gamma(I) + t \sum_{1\leq i\leq n} \HS(C_{x_i}).
\end{equation}

The above formula suggests that one may compute a rational form for the sum of
the Hilbert series $\HS(C)$ by successively transforming a monomial right ideal $I$
into its colon right ideals $(I :_R x_i)$, for all variables $x_i\in X$. We analize
this method in the next section under the assumption that there are only a finite
number of transforms to be performed. When such condition is satisfied will be
characterized in Section 5.

\section{Computing the Hilbert series}

It is useful to think the colon right ideal operation as a mapping on the set
of (monomial) right ideals of $F$.

\begin{definition}
Denote by $\mi$ the set of all monomial right ideals of $F$. For all $1\leq i\leq n$,
we define the function $T_{x_i}:\mi\to\mi$ such that $T_{x_i}(I) = (I :_R x_i)$,
for any monomial right ideal $I\in\mi$.
\end{definition}

\begin{definition}
Let $I\in\mi$. Define $\O_I\subset\mi$ the minimal subset containing $I$
such that $T_{x_i}(\O_I)\subset \O_I$, for all $1\leq i\leq n$. We call $\O_I$
the {\em orbit of $I$}. One has clearly that $\O_J\subset\O_I$, for any $J\in\O_I$.
\end{definition}

\begin{definition}
We call $I\in\mi$ a {\em regular (monomial) right ideal} if its orbit $\O_I$
is a finite set and we denote by $\ri$ the set of all such ideals.
Since $T_{x_i}(I)\in\O_I$ observe that $T_{x_i}$ maps the set $\ri$ into itself,
for each $1\leq i\leq n$.
\end{definition}

\begin{definition}
Let $I\in\ri$ with $\O_I = \{I_1,\ldots,I_r\}$. We define $A_I = (a_{kl})$
the integer $r$-by-$r$ matrix such that
\[
a_{kl} = \# \{1\leq i\leq n\mid T_{x_i}(I_k) = I_l\}.
\]
Moreover, we denote by $p_I(t)\in\ZZ[t]$ the characteristic polynomial of $A_I$
and we consider the 0-1 column $r$-vector
\[
\C_I = (1 - \gamma(I_1),\ldots,1 - \gamma(I_r))^T.
\]
We call $A_I, p_I(t)$ and $\C_I$ respectively the {\em adjacency matrix,
characteristic polynomial and constant vector of (the orbit of) $I$}.
\end{definition}

In the above definition one has clearly to consider $\O_I$ as an ordered set.
Note also that in the vector $\C_I$ there is a single entry equal to 0
corresponding to the trivial ideal $F = \langle 1 \rangle$. In fact, such ideal
has to belong to the orbit $\O_I$ since for any monomial
$w = x_{i_1}\cdots x_{i_d}\in I$ one has that
\[
(T_{x_{i_d}}\cdots T_{x_{i_1}})(I) =
(((I :_R x_{i_1}) \cdots ) :_R x_{i_d}) =
(I :_R w) = \langle 1 \rangle.
\]

\begin{theorem}
\label{mainth}
Let $I\in\ri$ and define $C = F/I$ the corresponding monomial cyclic right module.
Then, the Hilbert series $\HS(C)$ is a rational function with integer
coefficients.
\end{theorem}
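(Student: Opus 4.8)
The plan is to turn the recursive identity (\ref{form}) into a linear system over the finite orbit $\O_I = \{I_1,\ldots,I_r\}$. First I would set up the vector of Hilbert series $\vH = (\HS(C_1),\ldots,\HS(C_r))^T$, where $C_k = F/I_k$. Applying formula (\ref{form}) to each ideal $I_k$ in the orbit, and observing that $T_{x_i}(I_k)$ is again one of the $I_l$, one gets
\[
\HS(C_k) = (1 - \gamma(I_k)) + t\sum_{1\leq i\leq n} \HS(C_{T_{x_i}(I_k)})
         = (1 - \gamma(I_k)) + t\sum_{l=1}^{r} a_{kl}\,\HS(C_l),
\]
where $a_{kl}$ is exactly the entry of the adjacency matrix $A_I$. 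In matrix form this reads $\vH = \C_I + t\,A_I\,\vH$, i.e. $(\mathrm{Id} - t\,A_I)\,\vH = \C_I$.

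Next I would argue that $\mathrm{Id} - t\,A_I$ is invertible over the field of rational functions $\QQ(t)$: its determinant is $\det(\mathrm{Id} - tA_I) = t^r\, p_I(1/t)$ up to sign, which is a nonzero polynomial in $t$ (it has constant term $1$, since setting $t=0$ gives $\det \mathrm{Id} = 1$). Hence by Cramer's rule
\[
\vH = (\mathrm{Id} - t\,A_I)^{-1}\,\C_I,
\]
and every component of $\vH$ is a rational function in $t$ with coefficients in $\QQ$ — in fact with integer coefficients after clearing, since $A_I$ and $\C_I$ have integer entries and $\det(\mathrm{Id}-tA_I)$ is monic-type with unit constant term, so the adjugate construction stays within $\ZZ[t]$ divided by a polynomial of constant term $1$. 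Since $I \in \O_I$, the desired series $\HS(C) = \HS(F/I)$ is one of these components, proving rationality with integer coefficients.

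The one genuine thing to check is that the recursion (\ref{form}) is legitimately being applied to an already-established finite closed system: this needs $T_{x_i}(\O_I) \subseteq \O_I$ (which is the defining property of the orbit) together with the fact that $\langle 1\rangle \in \O_I$ so that the recursion has a base case — both already recorded in the excerpt, the latter via the remark that $(I :_R w) = \langle 1\rangle$ for $w \in I$ and the nontriviality of $I$ is not even needed since $\gamma$ handles the trivial case. The main (mild) obstacle is purely bookkeeping: verifying that no component of $\vH$ is left undetermined, i.e. that the linear system genuinely pins down all $r$ series simultaneously rather than only up to the kernel of $\mathrm{Id}-tA_I$; this is exactly what the invertibility of $\mathrm{Id}-tA_I$ over $\QQ(t)$ secures, and it is immediate from the constant term of its determinant being $1$. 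I would close by noting that in fact $p_I(t)$ (hence $\det(\mathrm{Id}-tA_I)$) gives an explicit common denominator, which is the concrete output of the algorithm of Section 4.
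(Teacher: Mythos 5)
Your proof is correct and follows essentially the same route as the paper: apply the recursion (\ref{form}) over the finite orbit to obtain the linear system $(\mathrm{Id}-tA_I)\vH = \C_I$, then invert using the nonvanishing of $\det(\mathrm{Id}-tA_I)=t^rp_I(1/t)$. Your extra observations (constant term $1$ of the determinant, integrality via the adjugate, role of $\langle 1\rangle$ as base case handled by $\gamma$) are all consistent with and implicit in the paper's argument.
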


\begin{proof}
Let $\O_I = \{I_1,\ldots,I_r\}$ and put $C_k = F/I_k$ ($1\leq k\leq r)$.
Moreover, we put $H_k = \HS(C_k)$ and $c_k = 1 - \gamma(I_k)$. By applying
the formula (\ref{form}) for each cyclic module $C_k$, one obtains $r$ linear
equations in the $r$ unknowns $H_k$, namely
\[
H_k - t \sum_{1\leq i\leq n} H_{l_{ki}} = c_k\ (1\leq k\leq r)
\]
where the index $1\leq l_{ki}\leq r$ is defined such that $I_{l_{ki}} = T_{x_i}(I_k)$.
Note that these equations are with coefficients in the rational function field
$\QQ(t)$. By definition of the adjacency matrix $A_I$ and the constant vector $\C_I$,
one has therefore that the column vector $\vH = (H_1,\ldots,H_r)^T$
is a solution of the matrix equation
\begin{equation}
\label{mateq}
(I - t A_I) \vH = \C_I.
\end{equation}
Observe now that $\det(I - t A_I) = t^r p_I(1/t)\neq 0$ since $p_I(t) =
\det(t I - A_I)$ is the characteristic polynomial of the adjacency matrix
$A_I$. We conclude that the equation (\ref{mateq}) has a unique solution
$\vH = (I - t A_I)^{-1} \C_I$.
\end{proof}

An immediate consequence of the above result is the following one.

\begin{corollary}
Let $M\subset F^r$ be a monomial right submodule and denote by
$N = F^r/M$ the corresponding finitely generated monomial right module.
Moreover, denote by $M = \bigoplus_i e_i I_i$ the decomposition of $M$ into
monomial right ideals according to Proposition \ref{cycdec}.
If all the ideals $I_i$ are regular ones then the Hilbert series $\HS(N)$
is a rational function with integer coefficients.
\end{corollary}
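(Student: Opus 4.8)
The plan is to combine the two structural results already in place, so the argument is essentially a bookkeeping one. First I would invoke Proposition~\ref{cycdec}: the decomposition $M = \bigoplus_i e_i I_i$ gives $\HS(N) = \sum_i \HS(C_i)$, where $C_i = F/I_i$ and the index $i$ ranges over the finite set $\{1,\ldots,r\}$ coming from the canonical basis $\{e_i\}$ of $F^r$. This reduces the claim about $N$ to the corresponding claims about the finitely many cyclic modules $C_i$.

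Next, since by hypothesis each $I_i$ belongs to $\ri$, Theorem~\ref{mainth} applies to every $C_i$ and yields that $\HS(C_i)$ is a rational function with integer coefficients. More precisely, reading off the proof of that theorem, $\HS(C_i)$ is the appropriate entry of $(I - t A_{I_i})^{-1}\C_{I_i}$, hence can be written as a quotient $a_i(t)/b_i(t)$ with $a_i(t), b_i(t)\in\ZZ[t]$ and $b_i(t) = \det(I - t A_{I_i}) = t^{r_i} p_{I_i}(1/t)\neq 0$. I would then only need to observe that a finite sum of rational functions of this type is again of this type: clearing denominators over the common denominator $\prod_i b_i(t)\in\ZZ[t]$ produces the numerator $\sum_i a_i(t)\prod_{j\neq i} b_j(t)\in\ZZ[t]$, so $\HS(N)$ is exhibited as a ratio of two integer polynomials. (If a smaller denominator is desired, one may use the least common multiple of the $b_i(t)$ in place of their product.)

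I do not expect a genuine obstacle here, as the corollary is a direct formal consequence of Proposition~\ref{cycdec} together with Theorem~\ref{mainth}. The only point that deserves an explicit word is precisely the elementary denominator-clearing remark above, namely that being a rational function \emph{with integer coefficients} is stable under finite sums; everything else has already been done in the preceding sections.
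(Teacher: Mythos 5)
Your proposal is correct and matches the paper's intent exactly: the paper states the corollary as ``an immediate consequence'' of Theorem~\ref{mainth}, via the decomposition $\HS(N) = \sum_i \HS(C_i)$ from Proposition~\ref{cycdec}. Your explicit denominator-clearing remark just fills in the routine detail the paper leaves tacit.
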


The numerator and the denominator of the Hilbert series of a regular right
ideal can be explicitly obtained in the following way.

\begin{definition}
Let $I\in\ri$ and assume $\O_I = \{I_1,\ldots,I_r\}$ with
$I_1 = I$. Denote by $\B_I$ the column $r$-vector which coincides with
the first column of the cofactor matrix of the $r$-by-$r$ matrix $I - t A_I$.
Note that the entries of $\B_I$ are polynomials with integer coefficients.
We call $\B_I$ the {\em polynomial vector of (the orbit) of $I$}.
\end{definition}

The matrix equation (\ref{mateq}) implies immediately the following result.

\begin{corollary}
Let $I\in\ri$ and consider $C = F/I$. The rational function $\HS(C) = f(t)/g(t)$
$(f(t),g(t)\in\ZZ[t])$ is such that
\begin{equation}
\label{form2}
f(t) = \B_I^T \C_I, g(t) = t^r p_I(1/t)
\end{equation}
where $r = \# \O_I = \deg(p_I(t))$.
\end{corollary}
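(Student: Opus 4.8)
The plan is to read off both the numerator and the denominator directly from the matrix equation \eqref{mateq} together with Cramer's rule. First I would recall from the proof of Theorem \ref{mainth} that $\vH = (H_1,\ldots,H_r)^T$ with $H_1 = \HS(C)$ is the unique solution of $(I - t A_I)\vH = \C_I$, and that the coefficient determinant is $\det(I - t A_I) = t^r p_I(1/t)$, which is a nonzero element of $\ZZ[t]$ of degree $r = \#\O_I = \deg p_I(t)$. This already identifies the denominator $g(t) = t^r p_I(1/t)$, provided we check that the resulting rational function is written with this particular choice of $g$ (any common factor of numerator and denominator is irrelevant to the statement, since it only claims \emph{a} representation $\HS(C) = f(t)/g(t)$ with these explicit $f,g$).

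Next I would extract $H_1$ by Cramer's rule: $H_1 = \det(I - tA_I)^{-1} \cdot \det M_1$, where $M_1$ is the matrix $I - tA_I$ with its first column replaced by $\C_I$. Expanding $\det M_1$ along that first column gives $\det M_1 = \sum_{k=1}^r (\C_I)_k \, (\text{cofactor}_{k,1}\text{ of } I - tA_I) = \B_I^T \C_I$, by the very definition of $\B_I$ as the first column of the cofactor matrix of $I - tA_I$. Hence $f(t) = \B_I^T\C_I$ and $g(t) = t^r p_I(1/t)$, and both lie in $\ZZ[t]$ because $A_I$ is an integer matrix (so its cofactors are integer polynomials in $t$) and $\C_I$ is a $0$-$1$ vector. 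This is essentially the whole argument; it is a routine application of Cramer's rule once the bookkeeping of which cofactor column is which has been set up correctly.

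The only point that needs a word of care — and the closest thing to an obstacle — is the indexing convention: the cofactor matrix (adjugate transpose) versus the adjugate, and the fact that solving for the \emph{first} unknown $H_1$ picks out the \emph{first column} of the cofactor matrix (equivalently the first row of the adjugate). I would state this explicitly so that $\B_I$ as defined in the preceding definition matches the vector appearing in Cramer's formula. Beyond that, one should note in passing that $\B_I$ depends on the chosen ordering of $\O_I$ with $I_1 = I$, which is exactly the hypothesis imposed in the definition of $\B_I$, so no generality is lost. No separate convergence or termination discussion is needed here, since regularity of $I$ (hence finiteness of $\O_I$ and invertibility of $I - tA_I$ over $\QQ(t)$) was already established in Theorem \ref{mainth}.
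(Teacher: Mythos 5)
Your proof is correct and follows exactly the route the paper intends (the paper itself only says the corollary "follows immediately" from the matrix equation \eqref{mateq}): Cramer's rule, equivalently $\vH = \det(I - tA_I)^{-1}\,\mathrm{adj}(I - tA_I)\,\C_I$, with the first-column cofactor expansion identifying $f(t)=\B_I^T\C_I$ and $g(t)=t^r p_I(1/t)$. The only nit is your aside that $t^r p_I(1/t)$ has degree $r$ in $t$ --- it need not (it is the reciprocal polynomial, whose degree drops if $p_I$ vanishes at $0$); the $r=\deg p_I(t)$ in the statement refers to $p_I$, not to $g$, and in any case this does not affect the formula.
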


Observe in the above result that the entry of $\B_I$ corresponding to the
zero entry of $\C_I$ ($\langle 1\rangle\in\O_I$) does not clearly contribute
to the formation of the numerator $f(t)$.
We present now a simple algorithm to compute the data defining the matrix
equation (\ref{mateq}) corresponding to a regular right ideal.

\newpage
\suppressfloats[b]
\floatname{algorithm}{Algorithm}
\begin{algorithm}\caption{\OrbitData}
\begin{algorithmic}[0]
\State \text{Input:} $I\in\ri$.
\State \text{Output:} $(\O_I,A_I,\C_I)$.
\State $\N:= \{I\}$;
\State $\O:= \{I\}$;
\State $A = (a_{kl}) := 0$ (square matrix);
\State $\C = (c_k)^T := 0$ (column vector);
\While{$\N\neq\emptyset$}
\State {{\bf choose}} $J\in\N$;
\State $\N:= \N\setminus\{J\}$;
\State $k:= $ the position of $J$ in $\O$;
\If{$J\neq \langle 1\rangle$}
\State $c_k:= 1$;
\EndIf;
\ForAll{$1\leq i\leq n$}
\State $J':= T_{x_i}(J)$;
\If{$J'\notin\O$}
\State $\N:= \N\cup\{J'\}$;
\State $\O:= \O\cup\{J'\}$;
\EndIf;
\State $l:= $ the position of $J'$ in $\O$;
\State $a_{kl}:= a_{kl} + 1$;
\EndFor;
\EndWhile;
\State \Return $(\O,A,\C)$.
\end{algorithmic}
\end{algorithm}

Note explicitly that $A$ and $\C$ are in fact a square matrix and a vector
of increasing dimension. When the algorithm stops, such dimension is exactly
the cardinality of $\O$. Then, the Hilbert series corresponding to the regular
right ideal $I$ is obtained by using the formula (\ref{form2})
or equivalently by solving the matrix equation $(I - t A)\vH = \C$.
Observe that the matrix $(I - t A)$ results sparse if the number of ideals
in the orbit $\O$ is large with respect to the number of variables $x_i$.

We describe now how the mapping $T_{x_i}$ is actually defined in terms of
a monomial basis of $I$. 

\begin{proposition}
\label{rprop}
Let $\{w_j\}\subset W$ be a right basis of a monomial right ideal $I\subset F$
and consider a monomial $w\in W$. For all $j$, we define the element
\begin{equation}
w'_j =
\left\{
\begin{array}{cl}
 1  & \mbox{if}\ w = w_j v_j\ (v_j\in W), \\
v_j & \mbox{if}\ w_j = w v_j, \\
 0  & \mbox{otherwise}.
\end{array}
\right.
\end{equation}
Then, a right basis of $(I :_R w)$ is given by the set $\{w'_j\}\subset W$.
In particular, if $w = x_i$, that is, $(I :_R w) = T_{x_i}(I)$ then
the right basis $\{w'_j\}$ is defined according to the following rule
\begin{equation}
w'_j =
\left\{
\begin{array}{cl}
 1  & \mbox{if}\ w_j = 1\ \mbox{or}\ w_j = x_i, \\
v_j & \mbox{if}\ w_j = x_i v_j\ (v_j\in W), \\
 0  & \mbox{otherwise}.
\end{array}
\right.
\end{equation}
\end{proposition}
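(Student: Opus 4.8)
The plan is to show directly that the monomial $w'_j$ defined in the statement generate $(I:_R w)$ as a right ideal, by a double-inclusion argument. First I would observe that since $I = \langle w_j \rangle$ is a monomial ideal and $w$ is a monomial, $(I:_R w)$ is itself a monomial ideal (as already noted in Section 3), so it suffices to identify which monomials $v \in W$ lie in $(I:_R w)$, i.e. for which $v$ one has $wv \in I$. By definition of a monomial ideal, $wv \in I$ if and only if $wv = w_j u$ for some index $j$ and some $u \in W$.

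The key combinatorial fact is that an equation $wv = w_j u$ between words forces one of $w_j, w$ to be a prefix of the other: either $w = w_j v_j$ for some $v_j \in W$ (the first case), or $w_j = w v_j$ for some $v_j \in W$ (the second case). I would make this the central step. In the first case, $w = w_j v_j$ means $w \in I$ already, so $w'_j = 1$ and $\langle 1 \rangle \subseteq (I:_R w)$; since also $(I:_R w) \subseteq \langle 1 \rangle$ trivially, we get $(I:_R w) = \langle 1 \rangle = \langle w'_j \rangle$ and we are done. In the remaining situation no $w_j$ is a proper prefix-or-equal of $w$ in the first sense, and for each $j$ with $w_j = w v_j$ we have $w \cdot v_j = w_j \in I$, so $v_j \in (I:_R w)$, giving the inclusion $\langle w'_j \mid w'_j \neq 0 \rangle \subseteq (I:_R w)$.

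For the reverse inclusion, take any monomial $v \in (I:_R w)$, so $wv = w_j u$ for some $j, u$. Applying the prefix dichotomy: if $w_j$ is a prefix of $w$ we are in the already-handled case; otherwise $w$ is a prefix of $w_j$, say $w_j = w v_j$, and then comparing $wv = w v_j u$ we get $v = v_j u$, so $v$ is a right multiple of $w'_j = v_j$. Hence $v \in \langle w'_j \rangle$, proving $(I:_R w) \subseteq \langle w'_j \rangle$. Combining the two inclusions gives the claim. The specialization to $w = x_i$ is then immediate: $x_i = w_j v_j$ forces $w_j = 1$ (with $v_j = x_i$) or $w_j = x_i$ (with $v_j = 1$), both giving $w'_j = 1$; $w_j = x_i v_j$ gives $w'_j = v_j$; and otherwise $w'_j = 0$.

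I expect the only real subtlety — hardly an obstacle — to be the careful handling of the degenerate overlaps: the words $1$ and $x_i$ among the $w_j$, and checking that the set $\{w'_j\}$ as written (including possible repetitions and the value $1$ collapsing the whole ideal) genuinely describes a right basis rather than merely a generating set. Since the problem only asserts that $\{w'_j\}$ is \emph{a} right basis (not necessarily minimal or free), it is enough to verify it is a generating set, which the double inclusion above accomplishes; minimality is not claimed and need not be addressed.
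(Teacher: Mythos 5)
Your proposal is correct and takes essentially the same route as the paper: both arguments reduce to monomials, use the fact that $wv \in I$ forces $wv = w_j u$ for some $j$ and $u$, and then apply the prefix dichotomy (one of $w$, $w_j$ must be a prefix of the other) to split into the case $w = w_j v_j$ (giving $(I:_R w) = \langle 1\rangle$) and the case $w_j = w v_j$ (giving $v = v_j u$). The only difference is cosmetic: you spell out the easy inclusion $\langle w'_j\rangle \subseteq (I:_R w)$ explicitly, whereas the paper's proof records only the nontrivial containment and leaves the converse to the reader.
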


\begin{proof}
If a monomial $t\in W$ belongs to $(I :_R w)$ then by definition $w t\in I$
and therefore $w t = w_j t'$, for some $t'\in W$ and some index $j$. One has two cases.
If $w = w_j v_j$ (hence $v_j t = t'$) for some $v_j\in W$, then $w\in I$ and therefore
$(I :_R w) = \langle 1 \rangle$. Otherwise, there is $v_j\in W$ such that $w_j = w v_j$
and $t = v_j t'$.
\end{proof}

An important case is when $I$ is a monomial two-sided ideal, that is, $C = F/I$
is a monomial algebra. Recall that a two-sided ideal is always contained in
the colon right ideals it defines.

\begin{proposition}
\label{tsprop}
Let $\{w_j\}\subset W$ be a two-sided basis of a monomial two-sided ideal
$I\subset F$ and consider $w\in W$. Assume that $w\notin I$, that is,
$(I :_R w)\neq \langle 1 \rangle$. For all $j$, we define the (finitely
generated) monomial right ideal
\[
I_w(w_j) = \langle v_{jk}\mid u_{jk} w_j = w v_{jk}, u_{jk},v_{jk}\in W,
\deg(v_{jk}) < \deg(w_j) \rangle.
\]
Then, one has that $(I :_R w) = \sum_j I_w(w_j) + I$.
\end{proposition}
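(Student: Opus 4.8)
The plan is to show a double inclusion between the right ideal $(I :_R w)$ and the right ideal $J := \sum_j I_w(w_j) + I$. The inclusion $J \subseteq (I :_R w)$ should be the routine direction: $I \subseteq (I :_R w)$ because $I$ is two-sided, and for each generator $v_{jk}$ of $I_w(w_j)$ one has $w v_{jk} = u_{jk} w_j \in I$ (using that $I$ is two-sided, so left-multiplying the two-sided generator $w_j$ keeps us in $I$), hence $v_{jk} \in (I :_R w)$; since $(I :_R w)$ is a right ideal, the right ideal it generates is contained in it.

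For the reverse inclusion, I would take a monomial $t \in (I :_R w)$, so $wt \in I$, and argue it lies in $J$. Since $\{w_j\}$ is a two-sided basis, $wt = u\, w_j\, v$ for some $u,v \in W$ and some index $j$. Now do a case analysis on how the occurrence of $w_j$ sits inside the word $wt$ relative to the prefix $w$. If $\deg(u) + \deg(w_j) \le \deg(w)$, then $w_j$ occurs entirely within the prefix $w$, i.e. $w_j$ is a factor of $w$, forcing $w \in I$ and $(I :_R w) = \langle 1\rangle$, contradicting the hypothesis $w \notin I$. Otherwise the occurrence of $w_j$ overruns the prefix: writing $w = u\, w_j'$ would not be possible, so instead $u$ is a proper prefix of $w$ and the factor $w_j$ extends past position $\deg(w)$. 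In that situation there is a (possibly empty) $v_{jk}$ with $u\, w_j = w\, v_{jk}$ and $\deg(v_{jk}) < \deg(w_j)$, which is exactly a generator of $I_w(w_j)$; and then $t = v_{jk}\, v$, so $t$ lies in the right ideal generated by that $v_{jk}$, hence in $I_w(w_j) \subseteq J$. One edge case to handle separately is when $u$ is a prefix of $w$ but $u\, w_j$ is also a prefix of $w$ — but that again puts $w_j$ as a factor of $w$ and is excluded as before.

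The main obstacle I expect is bookkeeping the overlap combinatorics cleanly: precisely characterizing, in terms of lengths, when the chosen occurrence of $w_j$ in $wt = u w_j v$ yields a "boundary-crossing" suffix $v_{jk}$ of $w_j$ of length strictly less than $\deg(w_j)$, versus when it is swallowed by the prefix $w$. Once the three positions (before $w$, straddling the end of $w$, after $w$) are separated correctly, each case is a short word-combinatorial identity. A subtle point worth stating explicitly is why we may assume $\deg(v_{jk}) < \deg(w_j)$ rather than $\le$: if $w_j$ starts exactly at position $\deg(w)$ in $wt$, then $v_{jk} = w_j$ and $w_j \in (I :_R w)$ already lies in $I \subseteq J$ (again using $I$ two-sided), so it need not be listed among the extra generators. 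Finally, since $(I :_R w)$ is a monomial right ideal (noted in Section 3), it suffices to have checked the inclusion on monomials $t$, which closes the argument.
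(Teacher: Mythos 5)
Your argument takes a more elementary route than the paper's. The paper first observes that $W\{w_j\} = \{u\,w_j \mid u \in W,\ j\ \text{any}\}$ is a monomial \emph{right} basis of the two-sided ideal $I$, applies Proposition~\ref{rprop} to that right basis to get a right generating set of $(I :_R w)$ consisting of the $v_{jk}$ with $u_{jk} w_j = w\,v_{jk}$, and then absorbs those $v_{jk}$ with $\deg(v_{jk}) \ge \deg(w_j)$ into $I$ (a length count shows they have the form $s\,w_j$, hence lie in $I$). You instead do a self-contained double inclusion at the monomial level, redoing the position bookkeeping by hand rather than invoking Proposition~\ref{rprop}. Both are valid; yours is more explicit, the paper's is shorter given the earlier proposition.

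There is, however, a gap in your case analysis for the reverse inclusion. Starting from $wt = u\,w_j\,v$, you split into: (a) $\deg(u)+\deg(w_j) \le \deg(w)$, where $w_j$ is a factor of $w$, contradicting $w \notin I$; and (b) ``otherwise,'' which you characterize as ``$u$ is a proper prefix of $w$ and $w_j$ extends past position $\deg(w)$.'' But the negation of (a) also contains the case $\deg(u) \ge \deg(w)$, where $w$ is a prefix of $u$, so $u = w\,s$ and $t = s\,w_j\,v \in I \subseteq J$. Your ``subtle point'' only catches the boundary sub-case $\deg(u) = \deg(w)$ (where $v_{jk}=w_j$), not $\deg(u) > \deg(w)$. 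The fix is immediate --- this is exactly what the ``$+\,I$'' term is for --- but the trichotomy should be stated explicitly: $w_j$ lies entirely in the prefix $w$ (impossible), $w_j$ straddles the boundary (yields a generator of some $I_w(w_j)$ with $0 < \deg(v_{jk}) < \deg(w_j)$), or $w_j$ lies entirely in $t$ (yields $t \in I$).
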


\begin{proof}
If $\{w_j\}$ is a monomial two-sided basis of $I$ then we have that the set
$W\{w_j\} = \{ u w_j\mid u\in W, j\,\mbox{any}\}$ is a monomial
right basis of $I$. By Proposition \ref{rprop} one has therefore that a right basis
of $(I :_R w)$ is given by the monomials $v_{jk}\in W$ such that
$u_{jk} w_j = w v_{jk}$, for some $u_{jk}\in W$. Moreover,
if $\deg(v_{jk})\geq \deg(w_j)$ then we have clearly that $v_{jk}\in I$.
\end{proof}

From the above results it is clear that if a monomial ideal $I\subset F$ is finitely
generated in the right or two-sided case then the mappings $T_{x_i}$ can be
effectively performed and the orbit $\O_I$ is finite. Observe explicitly that
if $I$ is a finitely generated two-sided ideal then $I$ is generally infinitely
generated as a right ideal and Proposition \ref{fgform} does not apply.
In this case, we can apply instead $\OrbitData$ which is an effective procedure
that can be easily implemented in any computer algebra system. We will provide
in fact some implementation and application of this algorithm in Section 7.
Note that commutative ideals are always finitely generated and hence $\OrbitData$
can be used also for computing commutative Hilbert series. We remark finally that
the rationality of the Hilbert series of a finitely presented monomial algebra
was first proved by Govorov \cite{Go} and further investigated by Anick \cite{An}
and Ufnarovski \cite{Uf1,Uf2,Uf3}.

Owing to non-Noetherianity of the free associative algebra, the monomial ideal $I$
may be infinitely generated also in the two-sided case and hence one can object
about the possibility to compute the transforms $T_{x_i}$. As a matter of fact,
we will show in the next section that the case when $I$ is regular, that is,
the orbit $\O_I$ is finite is also the case when the monomials of $I$ can be
represented by finitely many ``regular expressions'' which make possible
the computation of the $T_{x_i}$. We will apply this approach to a concrete
(infinitely generated) regular two-sided ideal in Section 6.

\section{Automata of monomial right ideals}

To the aim of understanding which monomial right ideals are regular, it is
convenient to introduce here the concept of deterministic automaton which is
a fundamental one in information theory and engineering. Recall that we have
denoted by $W$ the monoid of all monomials of $F = \KK\langle X\rangle$, that is,
the elements of $W$ are words over the alphabet $X = \{x_1,\ldots,x_n\}$.
Another usual notation for this monoid is $X^*$. Moreover, we denote by $W^{op}$
the opposite monoid of $W$.

\begin{definition}
Let $Q$ be any set. Fix an element $q_0\in Q$ and a subset $A\subset Q$.
By denoting $\F(Q)$ the monoid of all functions $Q\to Q$, we consider also a monoid
homomorphism $T:W^{op}\to \F(Q)$. In other words, the mapping $T$ defines a monoid
right action of $W$ on the set $Q$. By definition, a {\em deterministic automaton}
is any 5-tuple $(Q,X,T,q_0,A)$. This is called a {\em deterministic finite automaton},
briefly a {\em DFA}, when $Q$ is a finite set. The elements of $Q$ are usually called
the {\em states of the automaton} where $q_0$ is the {\em initial state}. A state
$q\in A$ is called an {\em accepting state}. The homomorphism $T$ is called
the {\em transition function of the automaton}. For this map we make use of the
notation $w = x_{i_1}\cdots x_{i_d}\mapsto T_w = T_{x_{i_d}}\cdots T_{x_{i_1}}$.
\end{definition}

Strictly related to the notion of automaton are the following concepts.

\begin{definition}
Any subset $L\subset W$ is called a {\em (formal) language}. A deterministic automaton
$(Q,X,T,q_0,A)$ {\em recognizes} the language $L$ if $L = \{w\in W\mid T_w(q_0)\in A\}$.
A language is called {\em regular} if it is recognized by a FDA. A FDA which recognizes
a regular language $L$ is called {\em minimal} if the number of its states
is minimal with respect of any other FDA recognizing $L$.
\end{definition}

Clearly, if the automaton $(Q,X,T,q_0,A)$ recognizes $L$ then $(Q,X,T,q_0,A^c)$
recognizes $L^c$ where $A^c,L^c$ are the complements of $A,L$ in $Q,W$, respectively.
It follows immediately that $L$ is a regular language if and only if $L^c$ is such.
For any language $L\subset W$, observe that the monoid structure of $W$ canonically
defines an infinite automaton recognizing $L$. Precisely, such automaton is
$(W,X,T,1,L)$ where $T_v(w) = w v$, for all $v,w\in W$.

\begin{definition}
Let $L\subset W$ and define the set $w^{-1}L = \{v\in W\mid w v\in L\}$,
for all $w\in W$. The {\em Nerode equivalence defined by $L$} is by definition
the following equivalence relation on $W$
\[
w \sim w'\ (w,w'\in W)\ \mbox{if and only if}\ w^{-1}L = w'^{-1}L.
\]
Observe that the quotient set $W/\sim$ is in one-to-one correspondence with
the set $\{w^{-1}L\mid w\in W\}$. Moreover, we have that the relation $\sim$
is right invariant, that is, $w\sim w'$ implies that $w v\sim w' v$, for all
$v,w,w'\in W$. Then, an induced automaton $(W/\sim,X,\bar{T},[1],[L])$
recognizing $L$ is defined by putting $\bar{T}_{v}([w]) = [w v]$, for any $v,w\in W$
and $[L] = \{[w]\mid w\in L\}$. This is called the {\em Nerode automaton
recognizing $L$}.
\end{definition}

Fundamental results in automata theory are the following ones (see, for instance,
\cite{DLV}).

\begin{theorem}[Myhill-Nerode]
A language $L\subset W$ is regular if and only if the quotient set $W/\sim$
is finite. In this case, the Nerode automaton is a minimal FDA recognizing $L$.
\end{theorem}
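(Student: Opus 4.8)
The plan is to prove the Myhill--Nerode theorem by exhibiting, in both directions, the relationship between an arbitrary DFA recognizing $L$ and the Nerode automaton, and then extracting from this the minimality statement. The key observation throughout is that for a deterministic automaton $(Q,X,T,q_0,A)$ recognizing $L$, each state reachable from $q_0$ refines the Nerode partition: if $T_w(q_0) = T_{w'}(q_0)$ then $w^{-1}L = w'^{-1}L$, because $wv \in L \iff T_{vw}(q_0) = (T_v \circ T_w)(q_0) \in A$ depends on $w$ only through $T_w(q_0)$. Hence the map $[w] \mapsto T_w(q_0)$ is a well-defined injection from the set of reachable states of the Nerode automaton into $Q$, which gives $\# (W/\sim) \leq \# Q$ whenever $Q$ is finite.

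First I would establish the ``only if'' direction: assume $L$ is regular, so there is a DFA $(Q,X,T,q_0,A)$ with $Q$ finite recognizing $L$. Without loss of generality every state is reachable from $q_0$ (otherwise discard the unreachable ones, which changes neither the recognized language nor finiteness). By the refinement observation above, the assignment $T_w(q_0) \mapsto [w] \in W/\sim$ is a well-defined surjection $Q \to W/\sim$, so $W/\sim$ is finite. Conversely, if $W/\sim$ is finite, then the Nerode automaton $(W/\sim, X, \bar T, [1], [L])$ is by construction a DFA (finitely many states, deterministic transition $\bar T_v([w]) = [wv]$), and it recognizes $L$: indeed $\bar T_w([1]) = [w]$, and $[w] \in [L]$ holds precisely when $w \in L$ — here one must check the small point that $[w] \in [L]$, i.e. $w \sim w'$ for some $w' \in L$, actually forces $w \in L$, which follows because $w \sim w'$ means $w^{-1}L = w'^{-1}L$ and $w' \in L$ is equivalent to $1 \in w'^{-1}L$. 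So $L$ is regular.

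For the minimality claim, suppose $L$ is regular and let $(Q,X,T,q_0,A)$ be any DFA recognizing $L$. Restricting to reachable states as before, the refinement observation gives a surjection $Q \twoheadrightarrow W/\sim$, hence $\# Q \geq \#(W/\sim)$, which is exactly the number of states of the Nerode automaton. Since the Nerode automaton is itself a DFA recognizing $L$ (by the previous paragraph), it attains this lower bound and is therefore minimal.

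The only genuinely delicate point — the ``main obstacle,'' though it is more a matter of care than of difficulty — is handling reachability cleanly: the inequality $\#(W/\sim) \leq \# Q$ requires that the map $w \mapsto T_w(q_0)$ be surjective onto the states one counts, so one must either restrict to the reachable subautomaton at the outset or phrase the counting argument only for reachable states. One should also be careful that the notation convention $T_w = T_{x_{i_d}} \cdots T_{x_{i_1}}$ (a right action) is used consistently when composing: the computation $T_{vw}(q_0) = T_v(T_w(q_0))$ is where the right-action convention is essential, and it is precisely what makes the Nerode equivalence right invariant and the argument go through. Everything else is a routine unwinding of the definitions of ``recognizes'' and of the Nerode equivalence.
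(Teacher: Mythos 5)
The paper states this theorem as a known result from automata theory, citing \cite{DLV}, and does not supply a proof, so there is no internal argument to compare against; the question is simply whether your proof is correct, and it is — it is the standard proof. Your key lemma, that $T_w(q_0) = T_{w'}(q_0)$ forces $w^{-1}L = w'^{-1}L$, yields a well-defined map from reachable states to Nerode classes sending $T_w(q_0)\mapsto [w]$, surjective after discarding unreachable states; this gives $\#(W/\sim)\leq \#Q$ for any DFA $(Q,X,T,q_0,A)$ recognizing $L$, which is both the finiteness direction and the minimality bound, while the converse is the routine verification that the Nerode automaton is a DFA recognizing $L$ when $W/\sim$ is finite (including your correct remark that $[w]\in[L]$ iff $w\in L$, since $w'\in L$ is equivalent to $1\in w'^{-1}L$). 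One notational slip worth fixing: under the paper's convention $T:W^{op}\to\F(Q)$ with $T_w = T_{x_{i_d}}\cdots T_{x_{i_1}}$, the composition identity you need is $T_{wv} = T_v\circ T_w$ (read $w$ first, then $v$), but you write $T_{vw} = T_v\circ T_w$ in two places; the computation you intend, $T_{wv}(q_0) = T_v(T_w(q_0))$, is the right one and is what makes the refinement observation and the right-invariance of $\sim$ go through — just swap the subscript order. Your attention to reachability in the counting step is exactly the point that is easy to elide, and you handle it correctly.
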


Given two languages $L,L'\subset W$, one can define their set-theoretic union
$L\cup L'$ and their product $L L' = \{w w'\mid w\in L,w'\in L'\}$. For any $d\geq 0$,
we have also the power $L^d = \{w_1\cdots w_d\mid w_i\in L\}$ ($L^0 = \{1\}$) and
the {\em star operation} $L^* = \bigcup_{d\geq 0} L^d$. In other words,
the language $L^*$ is the (free) submonoid of $W$ which is generated by $L$. 
The union, the product and the star operation are called the {\em rational operations}
over the languages.

\begin{theorem}[Kleene]
A language $L\subset W$ is regular if and only if it can be obtained from
finite languages by applying a finite number of rational operations.
\end{theorem}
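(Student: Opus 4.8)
The plan is to prove the two implications separately, using the Myhill--Nerode theorem above to translate ``regular'' into the finiteness of the set of left quotients $\{w^{-1}L\mid w\in W\}$.

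\emph{Rationally built languages are regular.} First I would note that a finite language $L$ is regular, since every $w^{-1}L$ is contained in the finite set consisting of $\emptyset$ together with the suffixes of the words of $L$, so $W/\!\sim$ is finite. It then suffices to show that regular languages are closed under the three rational operations. Closure under union is immediate from $w^{-1}(L\cup L') = w^{-1}L\cup w^{-1}L'$, which shows that a left quotient of $L\cup L'$ is determined by a pair of left quotients of $L$ and of $L'$, of which there are finitely many. For the product I would split a factorization $wz = ab$ with $a\in L$, $b\in L'$ according to whether $a$ is a prefix of $w$, obtaining the identity
\[
w^{-1}(LL') = (w^{-1}L)L' \cup \bigcup_{w = uv,\ u\in L} v^{-1}L' ;
\]
its right-hand side depends only on $w^{-1}L$ and on the subset $\{v^{-1}L'\mid w = uv,\ u\in L\}$ of the finite set of left quotients of $L'$, so $LL'$ has only finitely many left quotients. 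For the star operation one may assume $1\notin L$ (replacing $L$ by $L\setminus\{1\}$ leaves $L^*$ unchanged), and an analogous prefix-decomposition argument shows that $w^{-1}(L^*)$ is determined by the subset $\{v^{-1}L\mid w = sv,\ s\in L^*\}$ of left quotients of $L$ together with the single bit recording whether $w\in L^*$; again only finitely many values occur, so $L^*$ is regular.

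\emph{Regular languages are rationally built.} Let $L$ be regular. By Myhill--Nerode the Nerode automaton is a finite automaton $(W/\!\sim,X,\bar T,[1],[L])$ recognizing $L$; write its state set as $\{q_0,\dots,q_{m-1}\}$ with $q_0=[1]$. For $0\le k\le m$ and $0\le i,j\le m-1$ let $L_{ij}^{(k)}\subset W$ be the set of words $w$ with $\bar T_w(q_i) = q_j$ such that $\bar T_u(q_i)\in\{q_0,\dots,q_{k-1}\}$ for every proper nonempty prefix $u$ of $w$. I would prove by induction on $k$ that each $L_{ij}^{(k)}$ is obtainable from finite languages by finitely many rational operations. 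For $k = 0$ the condition forces $w$ to have length at most one, so $L_{ij}^{(0)}\subset\{1\}\cup X$ is finite; the inductive step rests on the recursion
\[
L_{ij}^{(k+1)} = L_{ij}^{(k)} \cup L_{ik}^{(k)}\,\bigl(L_{kk}^{(k)}\bigr)^{*}\,L_{kj}^{(k)} ,
\]
obtained by cutting a word at its first and its last visit to the state $q_k$. Taking $k=m$ and unioning over accepting states, $L = \bigcup_{q_j\in[L]} L_{0j}^{(m)}$ is a finite union of rationally built languages, hence is itself rationally built.

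The formal parts — the base cases, the union closure, and the assembly of $L$ from the $L_{0j}^{(m)}$ — are routine. The technical heart, and the step I expect to be the main obstacle, is the verification of the prefix-splitting identities for $w^{-1}(LL')$ and $w^{-1}(L^*)$, and of the cut-at-$q_k$ recursion, since each requires a careful analysis of how factorizations of a noncommutative word interact with its prefixes; once these are established, the rest is bookkeeping.
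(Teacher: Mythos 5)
The paper does not prove this statement: Kleene's theorem is quoted as a classical result of automata theory with a reference to \cite{DLV}, so there is no internal proof to compare against. Judged on its own, your argument is a correct and standard proof. The forward direction (closure of regular languages under the rational operations, via Myhill--Nerode and the finiteness of the set of left quotients) is essentially Brzozowski's derivative argument, and your quotient identities for $L\cup L'$, $LL'$ and $L^*$ are right --- in particular the observation that $w^{-1}(LL')$ and $w^{-1}(L^*)$ are \emph{functions of} finitely-valued data (a quotient of $L$ plus a subset of the quotients of $L'$, resp.\ a subset of the quotients of $L$ plus the bit $[w\in L^*]$), which is exactly what makes the count finite even though the identities themselves involve unions over factorizations of $w$. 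The reverse direction is the McNaughton--Yamada state-elimination recursion, which is the standard route. Two cosmetic points: the middle factor $\bigl(L_{kk}^{(k)}\bigr)^{*}$ arises from cutting the word at \emph{every} visit to $q_k$, not just the first and last (cutting only at the first and last would leave a middle piece in $L_{kk}^{(k+1)}$, making the recursion circular as written); and in the reduction to $1\notin L$ for the star you should note, as is immediate from $w^{-1}(L\setminus\{1\})=w^{-1}L$ for $w\neq 1$, that $L\setminus\{1\}$ is again regular. Neither affects the correctness of the argument.
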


\begin{example}
The language $L = \{(x_1x_2^i)^j\mid i,j\geq 0\}\cup \{(x_2x_1)^k\mid k\geq 0\}$
is regular since it is obtained from the finite languages $L_1 = \{x_1\},
L_2 = \{x_2\}$ by rational operations, namely $L = (L_1 L_2^*)^*\cup (L_2L_1)^*$.
\end{example}

We show now how automata theory applies to our approach to compute noncommutative
Hilbert series. Recall that $\mi$ denotes the set of all monomial right
ideals of $F$ and for each variable $x_i\in X$ we have map $T_{x_i}:\mi\to\mi$
such that $T_{x_i}(I) = (I :_R x_i)$, for any $I\in\mi$. We have therefore
a monoid homomorphism $T:W^{op}\to\F(\mi)$ such that $w = x_{i_1}\ldots x_{i_d}\mapsto
T_w = T_{x_{i_d}}\cdots T_{x_{i_1}}$. For each monomial right ideal $I\in\mi$,
we have clearly that
\[
T_w(I) = (I :_R w) = \{ f\in F\mid w f\in I\}.
\]
Fix now $I\in\mi$. One has immediately that $\O_I = \{T_w(I)\mid w\in W\}$
and hence we can restrict the right action of $W$ that $T$ defines on the
set $\mi$ to an action on the orbit $\O_I$. By abuse of notation,
we will denote the corresponding monoid homomorphism as $T:W^{op}\to \F(\O_I)$.
By considering the singleton $\{\langle 1\rangle\}\subset\O_I$, one has
then a deterministic automaton $(\O_I,X,T,I,\{\langle 1\rangle\})$.
The language $L$ which is recognized by this automaton is by definition
the set of monomials $w\in W$ such that $T_w(I) = (I :_R w) = \langle 1 \rangle$,
that is, $w\in I$. In words, we have that $L = I\cap W$. Observe that
the complementary automaton $(\O_I,X,T,I,\O_I\setminus\{\langle 1\rangle\})$
recognizes exactly the language $L = W\setminus I$ of the normal monomials
modulo $I$.

Recall now that the states of the Nerode automaton recognizing $L$ are in
one-to-one correspondence with the sets $w^{-1} L = \{v\in W\mid w v\in L\}$,
for all $w\in W$. Since $L = I\cap W$, it is clear now that $w^{-1} L =
(I :_R w)\cap W$ and hence one obtains the following result.

\begin{proposition}
\label{minimal}
The monomial right ideal $I\subset F$ is regular if and only if the corresponding
language $L = I\cap W$ is regular. In this case, the algorithm $\OrbitData$ computes
a FDA $(\O_I,X,T,I,\{\langle 1\rangle\})$ recognizing $L$ which is a minimal one.
\end{proposition}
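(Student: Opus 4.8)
The plan is to deduce both assertions from the Myhill--Nerode theorem, exploiting the identity $w^{-1}L = (I :_R w)\cap W$ recorded just before the statement. First I would produce a canonical bijection between the orbit $\O_I$ and the Nerode quotient $W/\sim$. Consider the map $\psi\colon W\to\O_I$, $w\mapsto T_w(I) = (I :_R w)$; it is onto because $\O_I = \{T_w(I)\mid w\in W\}$, and I claim that its fibres are exactly the $\sim$-classes. Intersecting with $W$ gives $(I :_R w) = (I :_R w')\Rightarrow w^{-1}L = w'^{-1}L$, so equal $\psi$-values imply $w\sim w'$. For the converse I would invoke the fact, established in Section~3, that every colon ideal $(I :_R w)$ is again a \emph{monomial} right ideal, hence is recovered from its set of monomials: $(I :_R w) = \langle(I :_R w)\cap W\rangle$. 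Consequently $w\sim w'$, i.e.\ $(I :_R w)\cap W = (I :_R w')\cap W$, forces $(I :_R w) = (I :_R w')$. Thus $\psi$ factors through a bijection $\phi\colon\O_I\to W/\sim$, $(I :_R w)\mapsto[w]$; in particular $\O_I$ is finite if and only if $W/\sim$ is, and by Myhill--Nerode the latter holds if and only if $L$ is regular. This settles the first assertion.

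For the second assertion, assume $I\in\ri$, so $\O_I$ is finite. Then every ideal that enters $\N$ is processed once and the \textbf{while}-loop of $\OrbitData$ halts; moreover the set $\O$ it returns is closed under all $T_{x_i}$ and is contained in $\O_I$, hence equals $\O_I$. Reading off the data computed by the algorithm one obtains the deterministic automaton $\mathcal{A} = (\O_I,X,T,I,\{\langle 1\rangle\})$: states are the colon ideals in $\O$, the transition maps are the $T_{x_i}$ restricted to $\O_I$ (their values $J' := T_{x_i}(J)$ being exactly what the inner loop computes), the initial state is $I = I_1$, and the unique accepting state $\langle 1\rangle$ is the one marked by the zero entry of $\C_I$. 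That $\mathcal{A}$ recognizes $L = I\cap W$ is precisely the observation above the statement, $T_w(I) = \langle 1\rangle\Leftrightarrow w\in I$. To obtain minimality I would promote the set bijection $\phi$ of the first part to an isomorphism of $\mathcal{A}$ onto the Nerode automaton $(W/\sim,X,\bar{T},[1],[L])$. Indeed $\phi$ carries the initial state $I = (I :_R 1)$ to $1^{-1}L = [1]$; it carries the accepting state $\langle 1\rangle = (I :_R w)$ (for $w\in L$) to $[w]$ with $w\in L$, and since all $w\in L$ satisfy $w^{-1}L = W$ these form the single class $[L]$, so accepting states correspond to accepting states; and for every $v\in W$ one has $\phi(T_v((I :_R w))) = \phi((I :_R wv)) = [wv] = \bar{T}_v([w]) = \bar{T}_v(\phi((I :_R w)))$, so $\phi$ intertwines the two right actions of $W$. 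An automaton isomorphic to the Nerode automaton has as many states as it, and by Myhill--Nerode the latter is a minimal FDA recognizing $L$; hence $\mathcal{A}$ is minimal too.

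The one step that is not bookkeeping is the equivalence $w\sim w'\Leftrightarrow(I :_R w) = (I :_R w')$: its nonobvious direction is where the monomial character of the colon ideals is used, guaranteeing that such an ideal is determined by its monomial set, and I expect this to be the crux. Everything else amounts to matching the output of $\OrbitData$ with the 5-tuple in the statement and checking that $\phi$ respects the initial state, the (single) accepting state and the transition maps. The degenerate cases $I = \langle 1\rangle$ (then $L = W$) and $I = 0$ (then $L = \emptyset$, and $\langle 1\rangle\notin\O_I$ so the accepting set is empty) can be checked directly, $\O_I$ being a single state and the associated automaton visibly minimal in each.
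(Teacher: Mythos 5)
Your proof is correct and follows the same route as the paper, which simply observes $w^{-1}L = (I :_R w)\cap W$ and invokes Myhill--Nerode. You helpfully make explicit two points the paper leaves implicit: that the colon ideals, being monomial, are determined by their monomial sets (so the correspondence $\O_I\to W/\sim$ is genuinely a bijection), and that this bijection is in fact an isomorphism of automata onto the Nerode automaton.
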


By the above proposition it follows that, in the regular case, the algorithm
$\OrbitData$ is an optimal one for computing the rational Hilbert series of
a monomial cyclic right module $C = F/I$ by means of the exact sequence (\ref{exseq}).
Moreover, we have the Kleene's theorem to verify if the monomial right ideal $I$
is a regular one.

\section{An illustrative example}

For illustrating how the algorithm $\OrbitData$ works in practice with infinitely
generated but regular monomial ideals, we consider an algebra which is associated
to the presentation of an Artin group defined by the following Coxeter matrix
\[
C = 
\left(
\begin{array}{ccc}
- & \infty & 3 \\
\infty & - & 2 \\
3 & 2 & -
\end{array}
\right).
\]
In other words, we consider the algebra $A = F/J$ where
$F = \KK\langle x,y,z \rangle$ and $J\subset F$ is the two-sided ideal
generated by the two binomials $y z - z y, x z x - z x z$. The base field
$\KK$ may be any. To the aim of computing the Hilbert series of $A$, we calculate
a two-sided \Gr\ basis $G$ of $J$ with respect to the graded left
lexicographic monomial ordering of $F$ with $x\succ y\succ z$.
It is immediate to verify that $G$ is the following infinite set
\[
G = \{yz - zy, xzx - zxz\}\cup \{x z^2 z^d x z - z x z^2 x x^d\mid d\geq 0\}.
\]
This implies that the leading monomial two-sided ideal $I = \LM(J)$ is as follows
\[
I = \langle yz, xzx \rangle + \langle x z^2 z^d x z\mid d\geq 0 \rangle.
\]
If $X = \{x,y,z\}$ and $W = X^*$ then the language $L = I\cap W$ of the
monomials of $I$ can be clearly written as
\[
L = X^* \{yz, xzx\} X^* \cup X^* \{xz^2\}\{z\}^*\{xz\} X^*.
\]
By the Kleene's theorem we have that $L$ is a regular language and hence
the orbit $\O_I$ is a finite set and the Hilbert series $\HS(A)$ is a rational
function. Recall that to calculate $\O_I$ we have to compute the colon right ideals
$I_w = T_w(I) = (I :_R w)$ ($w\in W$). Moreover, by Proposition \ref{tsprop}
one has a method to compute the colon right ideals that are defined by
a monomial two-sided ideal. Precisely, for computing $I_x = (I :_R x)$
we have to consider the following monomial right ideals
\[
I_x(yz) = 0, I_x(xzx) = \langle zx \rangle,
I_x(x z^2 z^d x z) = \langle z^2 z^d x z \rangle\ (d\geq 0).
\]
One obtains therefore that
\[
I_x = \langle zx \rangle + \langle z^2 z^d x z \mid d\geq 0 \rangle + I.
\]
In a similar way, we compute that $I_y = \langle z \rangle + I$
and we have that $\{I, I_x, I_y\}\subset\O_I$. Moreover, one has immediately
that $I_z = I$. We compute now
\[
I_{x^2}(yz) = 0, I_{x^2}(xzx) = \langle zx \rangle,
I_{x^2}(x z^2 z^d x z) = \langle z^2 z^d x z \rangle\ (d\geq 0)
\]
and one concludes that $I_{x^2} = I_x$. By considering the ideals
\[
I_{xy}(yz) = \langle z \rangle, I_{xy}(xzx) = 0,
I_{xy}(x z^2 z^d x z) = 0\ (d\geq 0)
\]
we obtain also that $I_{xy} = I_y$. In a similar way, one has that
\[
I_{xz} = \langle x \rangle + \langle z z^d x z \mid d\geq 0 \rangle + I
\]
and hence $\{I, I_x, I_y, I_{xz}\}\subset\O_I$. It is immediate to calculate
now $I_{yx} =  I_x, I_{y^2} = I_y$ and $I_{yz} = \langle 1 \rangle$ because
$yz\in I$. One obtains then $\{I, I_x, I_y, I_{xz}, I_{yz}\}\subset \O_I$.
Clearly $I_{xzx} = \langle 1 \rangle$ and we have that $I_{xzy} = I_y$.
One computes also that
\[
I_{xz^2} = \langle z^d x z \mid d\geq 0 \rangle + I
\]
and therefore $\{I, I_x, I_y, I_{xz}, I_{yz}, I_{xz^2}\}\subset\O_I$.
It is clear that the ideal $I_{yz} = \langle 1 \rangle$ is invariant under
the maps $T_x,T_y,T_z$ and hence we apply them to the ideal $I_{xz^2}$.
One computes the following monomial right ideals
\begin{equation*}
\begin{gathered}
I_{xz^2x}(yz) = 0, I_{xz^2x}(xzx) = \langle zx \rangle, \\
I_{xz^2x}(x z^2 x z) = \langle z \rangle,
I_{xz^2x}(x z^2 z^d x z) = \langle z^2 z^d x z \rangle\ (d > 0)
\end{gathered}
\end{equation*}
and we have therefore that $I_{xz^2x} = I_y$. In a similar way, one has that
$I_{xz^2y} = I_y$ and $I_{xz^3} = I_{xz^2}$ and we conclude that
\[
\O_I = \{I, I_x, I_y, I_{xz}, I_{yz}, I_{xz^2}\}.
\]
Then, for the regular ideal $I$ one has the following adjacency matrix
\[
A_I =
\left(
\begin{array}{cccccc}
1 & 1 & 1 & 0 & 0 & 0 \\
0 & 1 & 1 & 1 & 0 & 0 \\
0 & 1 & 1 & 0 & 1 & 0 \\
0 & 0 & 1 & 0 & 1 & 1 \\
0 & 0 & 0 & 0 & 3 & 0 \\
0 & 0 & 2 & 0 & 0 & 1 \\
\end{array}
\right)
\]
and the constant vector $\C_I = (1,1,1,1,1,0,1)^T$. Denote now by $\vH$
the column vector whose entries are the sums of the Hilbert series
of the cyclic right modules defined by the right ideals in the orbit $\O_I$.
From Theorem \ref{mainth} it follows that $\vH$ is obtained by solving over
the field $\QQ(t)$ the linear system corresponding to the matrix equation
(\ref{mateq}). In particular, we obtain easily that
\[
\HS(A) = \frac{1}{(t - 1)(t^2 + 2t - 1)}.
\]
Since the roots of the denominator are $1, -1\pm \sqrt{2}$, note finally
that the algebra $A = F/J$ has an exponential growth with exponential dimension
(see, for instance, \cite{Uf3})
\[
\limsup_{d\to\infty} \root d \of {\dim_\KK A_d} = \frac{1}{-1 + \sqrt{2}}.
\]

\section{Experiments}

By means of an experimental implementation of \OrbitData\ that we have
developed using the language of Maple, we propose now the computation
of the (generalized) Hilbert series of a couple of Hecke algebras which have
a finite \Gr\ basis for the two-sided ideal of the relations satisfied
by their generators. Precisely, consider the following two Coxeter matrices
of order 4
\[
C =
\left(
\begin{array}{
@{\hskip 2pt}c@{\hskip 2pt}c@{\hskip 2pt}c@{\hskip 2pt}c@{\hskip 2pt}
}
1 & 3 & 2 & 3 \\
3 & 1 & 3 & 2 \\
2 & 3 & 1 & 3 \\
3 & 2 & 3 & 1 \\
\end{array}
\right)\,,\,
C' =
\left(
\begin{array}{
@{\hskip 2pt}c@{\hskip 2pt}c@{\hskip 2pt}c@{\hskip 2pt}c@{\hskip 2pt}
}
1 & 3 & 3 & 2 \\
3 & 1 & 3 & 3 \\
3 & 3 & 1 & 3 \\
2 & 3 & 3 & 1 \\
\end{array}
\right).
\]
Let $F = \KK\langle x,y,z,v \rangle$ be the free associative algebra
with four generators and define the following two-sided ideals of $F$
\begin{equation*}
\begin{gathered}
J = \langle
(x  - q)(x + 1),
(y  - q)(y + 1),
(z  - q)(z + 1),
(v  - q)(v + 1), \\
x y x - y x y,
x z - z x,
x v x - v x v,
y z y - z y z,
y v - v y,
z v z - v z v
\rangle;
\\
J' = \langle
(x  - q)(x + 1),
(y  - q)(y + 1),
(z  - q)(z + 1),
(v  - q)(v + 1), \\
x y x - y x y,
x z x - z x z,
x v - v x,
y z y - z y z,
y v y - v y v,
z v z - v z v
\rangle.
\end{gathered}
\end{equation*}
Then, the Hecke algebras corresponding to the matrices $C,C'$ are
by definition the quotient algebras $A = F/J, A' = F/J'$. Observe that
the coefficients of the generators of $J,J'$ are $\pm 1$ together with
the parameter ``$q$''. In fact, one has the same situation for the \Gr\ bases
of these ideals, that is, the base field $\KK$ may be any field containing $q$.
For the graded left lexicographic monomial ordering of $F$
with $x\succ y\succ z\succ v$, one computes (see for instance \cite{LSL,LS})
the following finitely generated leading monomial ideals
\begin{equation*}
\begin{gathered}
\LM(J) = \langle
x^2, x z, y^2, y v, z^2, v^2, x y x, x v x, y z y, z v z, x y z x,
x v z x, x v z v, y z v y, \\
x v y x y, x v y x v, x y z v x v, x v z y x y, x v y z x y z,
x v y z x v z, x v z y x v y, x v y z v x v z, \\
x v z y z x y z, x v y z x v y z v, x v z y x v z y z,
x v z y z x v y z v, x v z y z x v z y z v
\rangle;
\\
\LM(J') = \langle
x^2, x v, y^2, z^2, v^2, x y x, x z x, y z y, y v y, z v z, x y v x,
x z v x, x y z x z, \\
x z y x y, y z v y v, y v z y z, x z v y x y, x y v z x z, x z y x z y z,
y v z y v z v, x z y v x y v, \\
x y z v x z v, x z v y x z y z, x z v y v x y v, x y v z v x z v, x z y x z y v z v,
x z v y x z y v z v
\rangle.
\end{gathered}
\end{equation*}
By performing \OrbitData\ for the monomial two-sided ideals $I = \LM(J)$ and
$I' = \LM(J')$, we obtain the following Hilbert series
\[
\HS(A) = \frac{(1 + t)(1 + t^2)}{(1 - t)^3}\,,\,
\HS(A') = \frac{(1 + t)(1 + t^2)(1 + t + t^2)}{(1 - t)(1 - t - t^2 - t^3 - t^4)}.
\]
From the roots of the denominators of such series it follows that the algebra $A$
has a polynomial growth but the growth of $A'$ is an exponential one.
The number of elements in the orbits $\O_I,\O_{I'}$ are respectively 36 and 33.
This implies that the linear system corresponding to the matrix equation
(\ref{mateq}) is quite sparse and hence easy to solve for both the orbits.
In other words, the solving time is irrelevant with respect to the computation
of the orbit. With our experimental implementation in the language of Maple,
the total computing times for $A,A'$ are respectively 0.9 and 0.7 sec. We obtain
such timings on a server running Maple 16 with a four core Intel Xeon at 3.16GHz
and 64 GB RAM.

\section{Conclusions and future directions}

The above experiments clearly show that the proposed method for computing
noncommutative Hilbert series is really feasible. Beside the ease of
implementing it by using Proposition \ref{rprop} and \ref{tsprop}, we believe
that this good behaviour relies on the property that the deterministic finite
automaton which is constructed by the algorithm \OrbitData\ is a minimal one
according to Proposition \ref{minimal}. Since the monomial exact sequence that
we use in Section 3 is similar to the ones which are generally used for the
computation of commutative Hilbert series (see, for instance, \cite{KR}),
one possible further investigation may consist in applying our method to
the commutative case and in comparing it with other existing strategies.
Of course, this can be done properly only when \OrbitData\ will be implemented
in the kernel of a computer algebra system. Such implementation will make
also available the Hilbert series of involved noncommutative algebras
and modules which are of interest in mathematics and physics.

One drawback of these monomial methods consists in the necessity of computing
\Gr\ bases in order to obtain leading monomial ideals. In the noncommutative case
these bases may be infinite but in fact we have proved in Section 5 and illustrated
in Section 6 that our method works also in the infinitely generated but regular
monomial case. Nevertheless, one may have, for instance, a finite minimal
presentation of a graded algebra and hence we may prefer to use this finite data
to try to determine its Hilbert series. This eventually leads to the computation
of the homology of the algebra which is of course of great interest in itself.
We hope therefore in the next future to provide new methods also for such
computations.

\section{Acknowledgments}

The author would like to thank Victor Ufnarovski for introducing him
to automata theory. He is also grateful to the research group of {\sc Singular}
\cite{DGPS} for the courtesy of allowing access to their servers for performing
computational experiments. Finally, the author would thank the anonymous
referees for the opportunity to apply the Occam's razor to the paper
once more.


\end{document}